\documentclass[12pt]{amsart}
\usepackage[utf8]{inputenc}
\usepackage[english]{babel}
\usepackage[T1]{fontenc}
\usepackage[utf8]{inputenc}
\usepackage{titlesec}
\usepackage{tikz}
\usetikzlibrary{arrows.meta}

\usepackage{tikz-cd}
\usepackage{graphicx}
\usepackage{amsmath,amssymb,amsthm}
\usepackage{wrapfig}
\usepackage{subcaption}
\usepackage{enumitem}
\usepackage{hyperref}
\usepackage{ragged2e}
\usepackage{xcolor}
\usepackage{mathrsfs}

\titleformat{\section}
  {\normalsize\bfseries\filcenter}
  {\thesection.}
  {0.5em}
  {}

\newtheorem{topo}{Theorem}[section]
\newtheorem{coro}[topo]{Corollary}
\newtheorem{df}[topo]{Definition}

\newtheorem{fakt}[topo]{Fact}

\newtheorem{lem}[topo]{Lemma}

\newcommand{\per}{\textit{Perf}}
\newcommand{\re}{\!\restriction\!}
\newcommand{\eme}{\mathcal{EM}}
\newcommand{\en}{\mathcal{EN}}

\newcommand{\om}{2^\omega}
\newcommand{\w}{\omega}

\newcommand{\sub}{\subseteq}

\newcommand{\sm}{\!\setminus\!}
\newcommand{\s}{\sigma}

\newcommand{\ww}{[\w]^\w}

\newcommand{\dm}{\text{dom}}
\usepackage{secdot}
\newcommand{\Io}{\mathcal{I}_0}
\newcommand{\ros}{\mathfrak{P}_2}
\newcommand{\omm}{2^{<\omega}}

\newcommand{\res}{\!\restriction\!}
\newcommand{\frr}{{}^\frown}
\newcommand{\fr}{^\frown}
\newcommand{\ps}{\text{p}\_\text{splits}^S(}
\renewcommand{\t}{\tau}

\newcommand{\ee}{\mathcal{EE}}
\newcommand{\bor}{\mathscr{B}}

\newcommand{\ii}{\mathcal{I}}
\newcommand{\sj}{\mathfrak{MK}}

\title{Everywhere $\mathcal{I}$ sets}

\author[J. Cieplechowicz]{Jakub Cieplechowicz}
\email{jakub.cieplechowicz@pwr.edu.pl}

\author[Sz. Żeberski]{Szymon Żeberski}
\email{szymon.zeberski@pwr.edu.pl}
\address[J. Cieplechowicz, Sz. Żeberski]{Faculty of Pure and Applied Mathematics, Wrocław University of Science and Technology, Wybrzeże Stanisława Wyspiańskiego 27, 50-370 Wrocław, Poland}

\subjclass{03E05; 54H05.}
\keywords{Cantor space, ideal, tree, perfect set, Borel set}

\begin{document}

\begin{abstract}
%\noindent\textbf{Abstract.} 
Let $\mathcal{I}$ be a $\s$-ideal on the Cantor space $\om$. A set $X\sub\om$ is called everywhere $\mathcal{I}$ if for any $S\in\ww$ the set $X\re S\in\mathcal{I}(2^S).$ We will discuss relations between such families in the class of perfect and Borel sets.
\end{abstract}

\maketitle

\section{Introduction}

In \cite{ros} Andrzej Rosłanowski introduced the following $\s$-ideal on the Cantor space $\om$:
\[
\ros=\{X\sub\om:(\forall S\in\ww)\ X\res S\neq 2^S\}.
\]
where $X\res S=\{x\res S:x\in X\}.$ This ideal is connected with Mycielski ideals, see \cite{myci}.

As for any $X\in\ros$ its restriction on \textit{every} $S\in\ww$ is not the whole $2^S$, we call sets from $\ros$ \textit{everywhere not everything} sets. Replacing `\(X\res S\neq 2^S\)' in the definition of \(\ros\) by `\(X\res S\) is countable', we obtain the family of \textit{everywhere countable} sets. It can be observed that a set is everywhere countable if and only if it is countable. Thus we obtain nothing new. Miroslav Repick$\acute{\text{y}}$ in \cite{repic} modified the definition of everywhere countable sets defining a $\s$-ideal:
\[\Io = \{X\sub\om:(\forall S\in\ww)(\exists A\in[S]^\w)\ |X\res A|\leq\w\}.
\]

In this paper, we will work in the Cantor space $\om$. The topology of $\om$ is generated by clopen sets of the form $[\s]=\{x\in\om:\s\sub x\}$ for $\s\in\omm=\bigcup_{n<\w}2^n$. By $\mathcal{M}$ we mean the $\s$-ideal of meager sets. Also, we consider the Cantor space $\om$ with the standard product measure, denoted by $\mu$. By $\mathcal{N}$ we mean the $\s$-ideal of null sets.

Fix $S\in\ww$. There is a unique increasing function $s:\w\to\w$ such that $s[\w]=\{s_i:\ i\in\omega\}=S$. Consider $2^{<S}:=\bigcup_{n\in \w\\}2^{\{s_i:\ i<n\}}$. For $\s\in 2^{<S}$ the sets of the form $[\s]_S=\{x\in 2^S:\s\sub x\}$ are clopens in $2^S$. Furthermore, they generate the topology of $2^S.$ By $\mu_S$ we mean a naturally defined measure $\mu$ on $2^S$.\\
Let $\mathcal{I}$ be a $\s$-ideal on $\om$.  Consider a natural bijection $\tilde{s}:\om\to 2^S$, 
\[\tilde{s}(\{\langle i, a_i\rangle : i\in\w\})=\{\langle s_i, a_i\rangle:i\in\w\}.
\]
%For fixed $S\in\ww$, we will consider $2^{<S}:=\bigcup_{n\in \w\\}2^{\{s_i\in S: i<n\}}$. For $\s\in 2^{<S}$ the sets of the form $[\s]_S=\{x\in 2^S:\s\sub x\}$ are clopens in $2^S$; moreover they generate the topology of $2^S.$ By $\mu_S$ we mean a naturally defined measure $\mu$ on $2^S$. 
%Let $\mathcal{I}$ be a $\s$-ideal on $\om$. Fix $S\in\ww$. Hence, there is a unique increasing function $s:\w\to\w$ such that $s[\w]=S$. Consider a natural bijection $\tilde{s}:\om\to 2^S$, $\tilde{s}(\langle s_i:i\in\w\rangle)=\{\langle s(i), s_i\rangle:i\in\w\}$. 
Let 
\[A\in\mathcal{I}(2^S)\iff \tilde{s}[A]\in\mathcal{I}.\]
Notice that $\mathcal{I}(2^S)$ is a $\s$-ideal $\mathcal{I}$ on $2^S$ and therefore we can define a $\s$-ideal of \textit{everywhere} $\mathcal{I}$ sets:
\[\mathcal{EI}=\{X\sub\om:(\forall S\in\ww)\ X\res S\in \mathcal{I}(2^S)\}.\tag{\textit{$\blacktriangle$}} \]
We can formulate a useful fact:
\begin{fakt}\label{eij}
    For $\s$-ideals $\ii,\mathcal{J}\sub\mathcal{P}(\om)$ we get $\mathcal{E}(\ii\cap\mathcal{J})=\mathcal{E}(\ii)\cap\mathcal{E}(\mathcal{J)}$.
\end{fakt}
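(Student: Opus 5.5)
This follows directly by unfolding the definition ($\blacktriangle$) on both sides. The only ingredient to check first is that the relativization commutes with intersection, namely
\[
(\ii\cap\mathcal{J})(2^S)=\ii(2^S)\cap\mathcal{J}(2^S)
\]
for each fixed $S\in\ww$.

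Fix $S\in\ww$ with its increasing enumeration $s$ and the bijection $\tilde{s}:\om\to 2^S$. By definition, $A\in(\ii\cap\mathcal{J})(2^S)$ means that the set $\tilde{s}[A]$, the image of $A$ transported back to $\om$ through the natural identification of $2^S$ with $\om$, lies in $\ii\cap\mathcal{J}$. This holds exactly when that set lies in $\ii$ and in $\mathcal{J}$, that is, when $A\in\ii(2^S)$ and $A\in\mathcal{J}(2^S)$. The key point is that the transport map depends only on $S$ and not on the ideal, so the same set is tested against both ideals.

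For the main statement, take $X\sub\om$. Then $X\in\mathcal{E}(\ii\cap\mathcal{J})$ if and only if $X\res S\in(\ii\cap\mathcal{J})(2^S)$ for every $S\in\ww$. By the observation above, this is equivalent to: for every $S\in\ww$, $X\res S\in\ii(2^S)$ and $X\res S\in\mathcal{J}(2^S)$. Since a universal quantifier distributes over a conjunction, this is the same as saying that $X\res S\in\ii(2^S)$ for all $S\in\ww$ and $X\res S\in\mathcal{J}(2^S)$ for all $S\in\ww$. That is precisely $X\in\mathcal{E}(\ii)\cap\mathcal{E}(\mathcal{J})$.

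There is no genuine obstacle; the whole argument is a chain of equivalences, so both inclusions are obtained at once. The one point deserving care is the first step, which relies on $\mathcal{I}(2^S)$ being defined through the fixed map $\tilde{s}$ independently of $\mathcal{I}$. One may also remark that $\ii\cap\mathcal{J}$ is again a $\s$-ideal, so $\mathcal{E}(\ii\cap\mathcal{J})$ is well defined.
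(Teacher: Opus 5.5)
Your proof is correct and follows the paper's argument exactly. The paper likewise observes that $(\ii\cap\mathcal{J})(2^S)=\ii(2^S)\cap\mathcal{J}(2^S)$ for each $S\in\ww$ and then unfolds the definition of $\mathcal{E}$. Your reading of $\tilde{s}[A]$ as the set transported back to $\om$ (formally $\tilde{s}^{-1}[A]$, since $\tilde{s}:\om\to 2^S$) is the right interpretation of the paper's slightly loose notation; as you note, all that matters is that this transport does not depend on the ideal.
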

\begin{proof}
    Notice that $(\mathcal{I}\cap\mathcal{J})(2^S)=\mathcal{I}(2^S)\cap\mathcal{J}(2^S)$, where $S\in\ww.$ This immediately yields $\mathcal{E}(\ii\cap\mathcal{J})=\mathcal{E}(\ii)\cap\mathcal{E}(\mathcal{J)}$.
\end{proof}

In the fashion of $(\blacktriangle)$, another two ideals, besides $\ros$ and $\Io$, that will be our points of interest, were introduced in \cite{krasz} by Jan Kraszewski:
\begin{align*}
&\eme=\{X\sub\om:(\forall S\in\ww)\ X\res S\ \text{is meager in}\ 2^S\},\\
&\en=\{X\sub\om:(\forall S\in\ww)\ X\res S\ \text{is null in}\ 2^S\}.
\end{align*}
Sets from $\eme$ (respectively $\en$) are called \textit{everywhere meager} (\textit{everywhere null)}.

From the definitions we get $\Io\sub\eme\sub\ros\cap\mathcal{M}$ and $\Io\sub\en\sub\ros\cap\mathcal{N}$.

We will also examine a $\s$-ideal $\mathcal{EE}$, where $\mathcal{E}$ stands for $\s$-ideal generated by closed null subsets of $\om$. It can be shown that $\Io\sub\mathcal{EE}\sub\eme\cap\en$. To sum up, we have the diagram: 
\[
\begin{tikzpicture}[
  node distance=1.8cm and 2.2cm,
  every node/.style={inner sep=2pt},
  arr/.style={->, >=stealth}
]

\node (I0) at (0,0) {$\mathcal I_0$};
\node (UU1) at (2,0) {$\ee$};
\node (UUV) at (4.4,0) {$\mathcal E(\mathcal M\cap\mathcal N)$};

\node (UU2) at (7.2,1.1) {$\eme$};
\node (UV)  at (7.2,-1.1) {$\en$};
\node (B2)  at (9.4,0) {$\ros$};

\draw[arr] (I0) -- (UU1);
\draw[arr] (UU1) -- (UUV);

\draw[arr] (UUV) -- (UU2);
\draw[arr] (UUV) -- (UV);

\draw[arr] (UU2) -- (B2);
\draw[arr] (UV) -- (B2);

\end{tikzpicture}
\]

Where $\rightarrow$ means $\sub$. For the sake of simplicity, define
\[\sj=\{\Io,\ee, \mathcal{E}(\mathcal{M}\cap\mathcal{N}), \eme,\en,\ros\}.\]
An ideal that belongs to $\sj$ is called a $\sj$-ideal.

\vspace{0.5cm}

In \cite{repic} Repick$\acute{\text{y}}$ showed that $\Io$ contains perfect sets, so every $\sj$-ideal also. Our investigations revolve around finding properties that will help us decide whether certain kinds of perfect sets with these properties are in $\s$-ideals of our interest.

To describe perfect sets in $\om$, we will recall some definitions regarding trees. A set $T\sub \omm$ is a \textit{tree} if for each $\tau \in T$ and every $n\in\w$ we have $\tau\res n\in T$. 
\vspace{0.2cm}

\begin{df}
   \normalfont Let $T\sub\omm$ be a tree. Then

\begin{enumerate}
    \item[$\bullet$]$\text{succ}_T(\t)=\{a\in 2:\t\fr a\in T\},$
    
    \item[$\bullet$]$\text{split}(T)=\{\t\in T: |\text{succ}_T(\t)|=2\}, $
 
    \item[$\bullet$] $T$ is {\em perfect} if \[
(\forall\t\in T)(\exists\s\supseteq\t)(\s\in\text{split}(T)),\]
 
 \item[$\bullet$] $T$ is {\em uniformly perfect} if it is perfect and \[(\forall n\in\w)(2^n\cap T\sub \text{split}(T)\ \vee\ 2^n\cap \text{split}(T)=\emptyset),\]

 \item[$\bullet$] $T$ is a {\em splitting tree} if \[(\forall \t \in T)(\exists N \in \omega)(\forall n \ge N)(\forall i \in 2)
(\exists \t' \in T \cap 2^{n+1})(\t \subseteq \t'\ \land\ \t'(n) = i).\]
\end{enumerate}

Let us remark that each splitting tree is perfect. Splitting trees were investigated before, e.g., in \cite{ac}, \cite{hespi}, \cite{lagmil}, \cite{szst1}, \cite{szst2}. 
\end{df}

A body of a tree $T\sub 2^{<\w}$ is the set \[[T]=\{x\in \om :(\forall n\in\w)\, x\re n\in T\}\] of infinite branches of $T$. Bodies of perfect trees are perfect subsets of $\om$.

In the next section, we check what types of perfect sets are in $\sj$-ideals. We also give examples of perfect sets that belong to or omit the ideals from $\sj$.

Furthermore, M. Repick$\acute{\text{y}}$ in \cite{repic} found out that there is no analytic set in $\ros\sm\Io$, i.e., we cannot distinguish the $\sj$-ideals in the class of analytic sets. In the last section, we showed different proofs of the equality of the considered classes of ideals based on our results concerning perfect sets and the properties of particular $\sj$-ideals.

\section{Trees}

Let us denote by $\mathcal{UP}\sub \mathcal{P}(\omm)$ the class of all uniformly perfect trees on $\omm$.

\begin{topo}\label{up}
A body of a uniformly perfect tree is not in $\ros$.
     
\end{topo}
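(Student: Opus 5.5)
Let $T$ be a uniformly perfect tree and put $S=\{n\in\w: 2^n\cap T\sub\text{split}(T)\}$, the set of splitting levels. To show $[T]\notin\ros$ it suffices to exhibit some $S'\in\ww$ with $[T]\res S'=2^{S'}$. The natural choice is $S'=S$ itself, or possibly a shift of it.

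First I check that $S$ is infinite. Take any $\t\in T\cap 2^n$. Perfectness gives a splitting node $\s\supseteq\t$, and $|\s|\in S$ by uniformity. Moreover $T$ has nodes of every length: every node has an extension in $T$, since $\text{succ}_T(\t)\neq\emptyset$ follows from perfectness (a splitting extension exists). Applying this at arbitrarily large $n$ shows $S$ is unbounded.

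Next I establish the key property: for every $\t\in T$ with $|\t|=n$, we have $\text{succ}_T(\t)=\{0,1\}$ if $n\in S$, and $|\text{succ}_T(\t)|=1$ if $n\notin S$. The first case is immediate from the definition of $S$. In the second case, $2^n\cap\text{split}(T)=\emptyset$, so no node of length $n$ splits, while $\text{succ}_T(\t)$ is nonempty as noted above.

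Now fix $y\in 2^S$; I build $x\in[T]$ with $x(n)=y(n)$ for all $n\in S$. Define $x$ recursively with the invariant $x\res n\in T$. At stage $n$, if $n\in S$, the node $x\res n$ splits, so I set $x(n)=y(n)$. If $n\notin S$, I let $x(n)$ be the unique successor. Then every $x\res n$ lies in $T$, so $x\in[T]$, and $x\res S=y$. This gives $[T]\res S=2^S$, so $[T]\notin\ros$.

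The only subtle point is the indexing: a split at node $\t$ of length $n$ decides coordinate $n$ of the branch, not coordinate $n-1$. So I must take $S$ to be exactly the set of lengths of splitting nodes, as above. The remaining step that needs care is verifying that every node has at least one successor; this uses that a tree contains all initial segments of its nodes, together with perfectness.
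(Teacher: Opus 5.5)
Your proof is correct and follows the paper's argument exactly. You use the same set $S=\{n\in\w: 2^n\cap T\sub\text{split}(T)\}$ of splitting levels and show $[T]\res S=2^S$, supplying the details the paper leaves implicit: that $S$ is unbounded, that non-splitting levels have exactly one successor, and the recursive construction of a branch realizing any $y\in 2^S$. The only tacit assumption, which the paper shares, is $T\neq\emptyset$ (so that $\emptyset\in T$ starts your recursion); this is the standard convention for perfect trees.
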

\begin{proof}
     Take an arbitrary $T\in\mathcal{UP}$. Define 
\[ S=\{n\in\w : 2^n\cap T\sub \text{split}(T)\}.\]

Then $|S|=\w$ because $T$ is uniformly perfect. Moreover, $[T]\res S=2^{S}.$ So $[T]\notin \ros$.
\end{proof}

 The `pattern of splits' in a uniformly perfect tree leads us out of $\ros$ and then so out of every $\sj$-ideal.

\begin{df}
    \normalfont A tree $T\sub\omm$ has \textit{unique splits} if \[\forall n\in\w\quad |2^n\cap\text{split}(T)|\leq 1.\]
\end{df}

Denote by $\mathcal{US}\sub\mathcal{P}(\omm)$ the class of all perfect trees with unique splits. We can notice that $\mathcal{UP}\cap\mathcal{US}=\emptyset$.

One remark is that, we will write $\t_{001}\in\omm$ instead of $\t_{\langle 001\rangle}\in\omm$, where $\langle 001\rangle\in\omm$ is just an example of an index.

\begin{topo}
   There exists a perfect tree with unique splits whose body is in $\mathcal{I}_0$. 
\end{topo}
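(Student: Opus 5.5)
The plan is to build a tree $T$ in which every branch is identically $0$ except at the split levels. At each level at most one node splits. Every node that does not split is extended only by $0$, and the two successors of a splitting node are $\t\fr 0$ and $\t\fr 1$. Let $D=\{d_0<d_1<\dots\}$ be the set of levels $d$ with $2^d\cap\text{split}(T)\neq\emptyset$, and let $\t_d$ be the unique splitting node of length $d$. Then for every $x\in[T]$ and every $m\in\w$:
\[
x(m)=1 \implies m\in D\ \text{and}\ \t_m\sub x .
\]

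\textbf{Construction.} We build $T\cap 2^{n}$ by recursion on $n$. At each stage we keep a bookkeeping list of all nodes created so far, arranged so that every node appears infinitely often as a task. At the next designated split level we pick the current task node $\rho$ and take a node of the current level extending $\rho$; such a node exists because every node has been extended. That node splits, and all other nodes are extended by $0$. We choose infinitely many split levels, one task per level. Then every node of $T$ has a splitting extension, so $T$ is perfect, and by construction $T$ has unique splits.

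\textbf{Verifying $[T]\in\Io$.} Fix $S\in\ww$; we need an infinite $A\sub S$ with $[T]\res A$ countable.

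Case 1: $S\sm D$ is infinite. Take $A=S\sm D$. Every $x\in[T]$ vanishes on $A$, so $[T]\res A$ is a single point.

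Case 2: $S\cap D$ is infinite. Consider the infinite set of nodes $\{\t_d: d\in S\cap D\}$. Apply Ramsey's theorem to the colouring of pairs by ``comparable / incomparable''. This gives an infinite $A\sub S\cap D$ such that $\{\t_d:d\in A\}$ is either an antichain or a chain.

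If it is an antichain, each $x$ extends at most one $\t_d$ with $d\in A$. Hence $x\res A$ has at most one coordinate equal to $1$, and $[T]\res A$ is countable.

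If it is a chain, the nodes $\t_d$ ($d\in A$) lie along a single branch $b$. Any $x\neq b$ leaves $b$ at some level $k$. Coordinates $d\in A$ with $d\ge k$ give $x(d)=0$, since $\t_d\not\sub x$ there. So $x\res A$ is determined by the finite pattern $b\res(A\cap k)$ followed by zeros. Hence $[T]\res A$ consists of $b\res A$ together with countably many eventually-zero functions, which is countable.

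\textbf{Main obstacle.} The point needing most care is the dichotomy in Case 2. One must make sure that the chain case really yields only countably many restrictions. This rests on the invariant displayed at the start (that $x(m)=1$ forces $\t_m\sub x$), so the construction must extend non-splitting nodes strictly by $0$. The bookkeeping that guarantees perfectness is routine.
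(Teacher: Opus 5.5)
Your argument is correct, but it takes a different route from the paper's, in both the construction and the verification.

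\textbf{Construction.} The paper writes down an explicit tree, $\tau_{\sigma^\frown i}=\tau_\sigma{}^\frown 0^{n_\sigma}{}^\frown i$ with $n_\sigma=(1^\frown\sigma)_2$. It then checks by a length computation (property $(\ast)$) that only siblings have equal length, and this is what makes the splits unique. You get unique splits directly from the construction. The only price is a routine bookkeeping argument for perfectness.

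\textbf{Verification.} Both proofs rest on the same invariant: $x(m)=1$ forces $x\restriction m$ to be the unique splitting node of length $m$. The case analysis is organized differently.
\begin{itemize}
\item The paper takes $A=S$ when $S$ contains only finitely many split levels. Otherwise it runs a K\"onig-type recursion. It chooses $\sigma_0\subseteq\sigma_1\subseteq\cdots$, each with infinitely many split levels of $S$ above it, and lets $a_{n+1}$ be the least such level above $\sigma_n$. A branch leaving this sequence at stage $n$ then vanishes at all later $a_k$. The selected splitting nodes need not be pairwise comparable or pairwise incomparable, so no homogeneity is needed.
\item You dispose of the case $S\setminus D$ infinite in one line, since the restriction is then a single point. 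You then use Ramsey's theorem for pairs to reduce to a chain or an antichain, and both are immediate.
\end{itemize}
Your version is more conceptual. It makes explicit that \emph{any} perfect tree with unique splits whose non-splitting nodes extend only by $0$ has body in $\mathcal{I}_0$, the paper's tree included. The paper's version is more elementary: it uses only finite branching rather than Ramsey's theorem.

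\textbf{A small slip in the chain case.} Let $k$ be the first coordinate where $x$ and $b$ differ, and suppose $k\in A$. Then $\tau_k=b\restriction k\subseteq x$, so $x(k)=1$ is possible when $b(k)=0$. Vanishing is only guaranteed for $d\in A$ with $d>k$, so $x\restriction A$ need not be $b\restriction(A\cap k)$ followed by zeros. This is harmless: $x\restriction A$ still has finite support, and $2^A$ has only countably many finitely supported elements.
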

\begin{proof}
   First, for $\s\in\omm$ let $n_\s=(1\fr\s)_2=\sum_{i\in\dm(\s)}2^i\cdot \s(i)+ 2^{|\s|}$ be an interpretation of $1\fr\s$ as a natural number, e.g. $n_{10}=(1\fr10)_2=2^0\cdot0+2^1\cdot1+2^2\cdot1=6$.

   We will construct a sequence $\langle \tau_\s : \s\in \omm\rangle$, where each $\tau_\s\in\omm$, such that:
    \begin{enumerate}
        \item[(1)] $\tau_\emptyset=\emptyset$,
        \item[(2)] $(\forall\s)$\quad  $\tau_{\s^\frown i}=\tau_\s{}^\frown\underbrace{0\frr\dots\frr0}_{n_\s}\frr i$.
        %\item[(3)] $|t_\s|=|t_\eta|\iff (\exists\tau) (\tau\fr1=\s\ \wedge\ \tau\frr0=\eta)$.
    \end{enumerate}

At first, put $\tau_\emptyset=\emptyset$. Next, following $n_\emptyset=(1\fr\emptyset)_2=1$ let 
\begin{align*}
&\tau_0=\tau_{\emptyset\fr 0}=\tau_\emptyset\frr0\fr\dots\fr 0\fr 0=00,\\ &\tau_1=\tau_{\emptyset\fr1}=\tau_\emptyset\frr0\fr\dots\fr0\fr1=01. 
\end{align*}

We can notice that $\tau_\emptyset\subseteq \tau_0, \tau_1$. Now, as a reason of $n_0=(1\fr0)_2=2$, we take $\tau_{00}=\tau_{0\fr0}=\tau_0\frr00\fr0=00 000$. Similarly define $\tau_{11},\tau_{10}$ and $\tau_{01}$. We get that $\tau_0\sub \tau_{00},\tau_{01}$ and $\tau_1\sub \tau_{11}, \tau_{10}$. Suppose that we already have $\tau_\eta$, where $\eta\in 2^n$, with the above properties. So, there is $\s\in2^{n+1}$ such that $\s=\eta\fr i$, $i\in 2$. In the same fashion as before, put

\[
\tau_\s=\tau_{\eta\fr i}=\tau_\eta{}^\frown\underbrace{0\frr\dots\frr0}_{n_\eta}\frr i.
\]

By the recursion, we obtain $\langle \tau_\s : \s\in \omm\rangle$. Let $T=\{\tau_\s\in\omm :\s\in\omm\}$ be our tree. We also get that $\t_\s\sub\t_\xi$ if $\s\sub\xi$. Thus, we can observe that $T$ is indeed perfect. 
\vspace{0.3cm}

The following property will be crucial in showing that $[T]\in\Io$:

\[\mathbf{(\ast)} \quad |\tau_\s|=|\tau_\eta|\iff (\exists\mu) (\mu\fr1=\s\ \wedge\ \mu\frr0=\eta).\]

To verify that $T$ satisfies it, first observe that

 \[
 |\tau_\s|=|\tau_{\eta\fr i}|=|\tau                                                                                   _\eta|+n_\eta +1=\sum_{\nu\subsetneq\s }n_\nu +|\s|.
 \]

For any two $|\xi|<|\xi'|$ we want $\sum_{\nu\subsetneq\xi}n_\nu<\sum_{\nu'\subsetneq\xi'}n_{\nu'}$. Consider the worst case, namely $\xi\equiv1$ and $\xi'\equiv0$. Now see 
\begin{align*}
    |\tau_\xi|&=\sum_{\nu\subsetneq\xi}n_\nu + |\xi|=2^{|\xi|+1}-2-|\xi|+|\xi| \\&=2^{|\xi|+1}-2< 2^{|\xi'|}-1+|\xi'|=\sum_{\nu'\subsetneq\xi'}n_{\nu'}+|\xi'|=|\tau_{\xi'}|.
\end{align*}

Even in that case we have $|\tau_\xi|<|\tau_{\xi'}|$. Next let $|\tau_\s|=|\tau_\eta|$ for $\s\neq\eta$. If $|\s|\neq|\eta|$ then we have $|\tau_\s|\neq|\tau_\eta|$, so $|\s|=|\eta|$. Therefore, $\sum_{\nu\subsetneq\s}n_\nu=\sum_{\nu'\subsetneq\eta}n_{\nu'}$. That means 
\[(\forall\nu)(\nu\subsetneq\s\iff\nu\subsetneq\eta),\]
but $\s\neq\eta$, so they differ on the last place, then so the property $(\ast)$ is satisfied. Therefore, $T\in\mathcal{US}$. We can also make an observation that `split means having 1 as a value' in $T$.

   To show that $[T]\in\mathcal{I}_0$, pick an arbitrary $S\in[\w]^\w$. Let $S=\{s_i : i\in\w\}$, $i<j\iff s_i<s_j$. We want to find $\{a_i: i\in\w\}\in [S]^\w$ such that $[T]\res \{a_i: i\in\w\}$ is countable. We will focus on splitting places.

   To start off, define the \textit{possible splits} of $\s\in\omm$ in $S$:
\[\text{p}\_\text{splits}^S(\s)=\{n\in S: (\exists \tau\in 2^n)( \tau\supseteq\s\ \wedge\ \tau\fr1,\tau\fr0\in  T)\}.\]
    
To start off, let $a_0=\text{min}S$ and define $V_0=T\cap 2^{a_0+1}$. Thanks to the property $(\ast)$, for $\s,\s'\in V_0$,  we have 
\[
 \s\neq\s'\implies\ps\s)\cap\ps\s')=\emptyset
\] 
We have two cases to consider, namely:
\vspace{0.5cm}
    
    \textbf{Case 0.} $\ps\emptyset)$ is finite. 
\vspace{0.5cm}

    Then $[T]\res S\sub\{x\in 2^S: (\forall n\in S) (x(n)=1\to n\in\ps\emptyset))\}$, and see that
    \[|\{x\in 2^S: (\forall n\in S) (x(n)=1\to n\in\ps\emptyset))\}|\leq 2^{|\ps\emptyset)|}.\]
Thus, $[T]\res S$ is finite.

\vspace{1cm}

\textbf{Case 1.} $\exists\s_0\in V_0\quad |\ps\s_0)|=\w$.
\vspace{0.5cm}

Put $S_0=\ps\s_0)$ and then  $a_1=\text{min}S_0$. Next, define \[V_1=\{\tau\in T:\tau\supseteq\s_0\ \wedge\ \tau\in 2^{a_1+1}\}.\]

Because we are in Case 1., we know there is $\s_1\in V_1$ such that $|\ps\s_1)|=\w$. Now, in the same fashion as before, put $S_1=\ps\s_1)$ and then let $a_2=\text{min}S_1$. Define \[V_2=\{\tau\in T:\tau\supseteq\s_1\ \wedge\ \tau\in 2^{a_2+1}\}.\]
Suppose that we have $S_n$. Put $a_{n+1}=\text{min}S_n$ and then \[V_{n+1}=\{\tau\in T:\tau\supseteq\s_n\ \wedge\ \tau\in 2^{a_{n+1}+1}\}.\]

 Thanks to the recursion, we get the set $A=\{a_i:i<\w\}\sub S$. Also, the construction provides that $|A|=\w$.

We can make an observation that 
\[[T]\res A=\bigcup_{n\in\w}\bigcup_{\s\in V_n\setminus\{\s_{n}\}}\big([T]\cap[\s]\big)\res A\cup\{\bigcup_n\s_n\res A\}.\]

Take any $\s\in V_n\sm\{\s_n\}$. Next, notice 
\[\tau\in([T]\cap[\s])\res A\implies (\forall k\in A)(k>a_n\to \tau(k)=0), \]

so the set $([T]\cap[\s])\res A$ is finite. Therefore, $|[T]\res A|\leq \w$, and finally $[T]\in\Io$.

\end{proof}

 For $\s\in T\sub\omm$ let $N_\s$ be  a minimal number such that
    \[(\forall n\geq N_\s)(\forall i\in2)(\exists \s'\in T\cap2^{n+1})(\s\sub\s' \wedge\s'(n)=i).\]
    We call that $N_\s$ a \textit{splitting point} for $\s$.

\begin{topo}
    There exists a perfect tree with unique splits whose body is not in $\ros$.
\end{topo}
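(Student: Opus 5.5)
The plan is to build the tree $T$ and an infinite set $S=\{s_0<s_1<\dots\}$ simultaneously, so that $[T]\res S=2^S$ and hence $[T]\notin\ros$. The key observation is that a tree with unique splits may still have different non-splitting nodes at the same level continue with different values. So at the coordinates of $S$ we let nodes \emph{choose} different bits without splitting there, and we postpone the actual splits to distinct, staggered levels.

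By recursion on $k$ we construct levels $l_k$ and finite sets $F_k\sub T\cap 2^{l_k}$ of size $2^k$, together with a bijection $p\mapsto \t_p$ from $2^{\{s_0,\dots,s_{k-1}\}}$ onto $F_k$ satisfying $\t_p\res\{s_0,\dots,s_{k-1}\}=p$. Start with $l_0=0$ and $F_0=\{\emptyset\}$. Given $F_k$, enumerate it as $\t^0,\dots,\t^{2^k-1}$. On the levels $l_k,l_k+1,\dots,l_k+2^k-1$, we let the node extending $\t^j$ split exactly at level $l_k+j$; at every other level of this block, each node is extended by $0$. After this block each $\t^j$ has exactly two extensions, distinguished by the bit $i\in 2$ chosen at its split. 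Put $s_k=l_k+2^k$, extend the child with split bit $i$ by the value $i$ at coordinate $s_k$, and set $l_{k+1}=s_k+1$. Then $F_{k+1}$ consists of these $2^{k+1}$ nodes. Each pattern $p$ together with a value $i$ for $s_k$ is realized by exactly one new node, which extends $\t_p$. Finally, $T$ is the downward closure of $\bigcup_k F_k$.

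We then check the required properties. Every level contains at most one splitting node: splits occur only inside the blocks $[l_k,l_k+2^k)$, one per level, and nowhere else (in particular not at the $s_k$). Every node lies below some element of some $F_k$, and every element of $F_k$ splits in the next block, so $T$ is perfect. Hence $T\in\mathcal{US}$.

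For $[T]\res S=2^S$, fix $x\in 2^S$. By the construction, the nodes $\t_{x\res\{s_0,\dots,s_{k-1}\}}\in F_k$ form an increasing chain, since each is the unique node of $F_{k+1}$ extending the previous one with the correct value at $s_k$. Their union is a branch $y\in[T]$ with $y\res S=x$. Therefore $[T]\res S=2^S$, and so $[T]\notin\ros$.

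The only genuine obstacle is reconciling unique splits with full restriction to $S$. A uniformly perfect tree achieves the latter (Theorem~\ref{up}) but violates the former. The staggering of splits into blocks of length $2^k$, with the bits at $s_k$ copied from the split bits, is exactly what resolves this tension. The remaining verification is routine bookkeeping.
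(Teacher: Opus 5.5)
Your construction is correct, and it takes a genuinely different route from the paper.

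The paper writes down one explicit tree $H$. In it, $\tau_{\sigma^\frown i}$ extends $\tau_\sigma$ by a constant block of $i$'s of length $n_\sigma+i$, where $n_\sigma$ codes $1^\frown\sigma$ in binary. Unique splits then reduces to the arithmetic claim $(\star)$ that the lengths $|\tau_\sigma|$ are pairwise distinct. The escape from $\ros$ comes from $H$ being a splitting tree: past $|\tau_\sigma|$, every coordinate takes both values on branches through $\tau_\sigma$ (continue with all $0$'s or all $1$'s). So $A$ can be chosen inductively above the splitting points $N_\sigma$, which is exactly the argument the paper then generalizes to every tree in $\mathcal{ST}$.

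You instead build $T$ and $S$ together and place each $s_k$ at a level where nothing splits. There the $2^{k+1}$ nodes of that level simply copy their most recent split bit. Unique splits is then automatic from the staggered schedule of splits inside blocks of length $2^k$. Also, $[T]\res S=2^S$ follows directly from the bijection $p\mapsto\tau_p$, with no length arithmetic.

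What each approach buys:
\begin{itemize}
\item The paper's example lies in $\mathcal{US}\cap\mathcal{ST}$, which is what motivates the splitting-tree theorem.
\item Your tree is \emph{not} a splitting tree. For $\tau\in F_1$ and every $m\geq 1$, the coordinate $l_m+j$ equals $0$ on all branches through $\tau$ whenever the $j$-th node of $F_m$ does not extend $\tau$. So your example shows that a tree in $\mathcal{US}$ can have full projection onto $S$ by a different mechanism: distinct non-splitting nodes on one level taking different values.
\end{itemize}

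Your bookkeeping is also more robust. The paper's check of $(\star)$ depends on reading $n_\sigma$ most-significant-bit first, as in its example $n_{10}=6$. That reading gives $|\tau_\sigma|=2^{|\sigma|}-1+(\sigma)_2$, where $(\sigma)_2$ is the binary value of $\sigma$. With the least-significant-bit formula displayed in the preceding proof, one would instead get $|\tau_{100}|=|\tau_{010}|=10$.
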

\begin{proof}
    We will construct a sequence $\langle\tau_\s : \s\in\omm\rangle$, where each $\tau_\s\in\omm$, in such a way that
\begin{enumerate}
\item[(1)] $\tau_\emptyset=\emptyset$,
        \item[(2)] For each $\s$ we have $\tau_{\s^\frown i}=\tau_\s{}^\frown\underbrace{i\frr\dots\frr i}_{n_\s+i}$, where $n_\s=(1\fr\s)_2$. 
        %\item[(3)] $|t_\s|=|t_\eta|\iff \s=\eta$.
\end{enumerate}
\vspace{0.5cm}

First, put $\tau_\emptyset=\emptyset$. Now, because $n_\emptyset+1=2$ and $n_\emptyset+0=1$ we set respectively
\begin{align*}
&\tau_1=\tau_{\emptyset\fr 1}=\tau_\emptyset\frr1\fr\dots\fr1=11,\\ 
&\t_0=\t_{\emptyset\fr0}=\t_\emptyset\frr0\fr\dots\fr0=0.
\end{align*}
We can notice that $\t_\emptyset\sub\t_1, \t_0$. In the same fashion, define $\t_{00},\t_{01}, \t_{10}$ and $\t_{11}$. We also get that $\t_0\sub \t_{00}, \t_{01}$ and $\t_1\sub \t_{10}, \t_{11}$.  Suppose that we have already set $\t_\eta$ for $\eta\in 2^n$. So, there is $\s\in2^{n+1}$ such that $\s=\eta\fr i$, $i\in 2$. As before, put
\[
\t_\s=\t_{\eta\fr i}=\t_\eta{}^\frown\underbrace{i\frr\dots\frr i}_{n_\s+i}.
\]

By the recursion, we obtain $\langle \tau_\s : \s\in \omm\rangle$. Let $H=\{\tau_\s\in\omm :\s\in\omm\}$ be our perfect tree. We also get that $\t_\s\sub\t_\xi$ if $\s\sub\xi$. So, $H$ is a perfect tree.
\vspace{0.3cm}

To show that this construction works, we will prove that $H$ satisfies

\[\mathbf{(\star)}\quad |\t_\s|=|\t_\eta|\iff \s=\eta.\]

First, see that
\[|\t_\s|=|\t_{\eta\fr i}|=|\t_\eta|+n_\eta+i=\sum_{\nu\subsetneq\s}n_\nu\ +\sum_{k\in \text{dom}(\s)}\hspace{-0.3cm}\s(k).\]

Let us pick $|\s|>|\eta|$ such that $|\t_\s|=|\t_\eta|$. Clearly, $\s\neq\eta$. Let us consider `the worst scenario', namely $\s\equiv0$ and $\eta\equiv1$. Thus \[|\t_\s|=\sum_{\delta\subsetneq\s}n_{\delta}=2^{|\s|}-1.\] But for $\eta$ we get 
\[|\t_\eta|=\sum_{\delta'\subsetneq\eta}n_{\delta'}+\sum_{k\in \text{dom}(\eta)}\eta(k)=2^{|\eta|+1}-2-|\eta|+|\eta|=2^{|\eta|+1}-2.\] 

So, we obtain $|\t_\eta|<|\t_\s|$ if $|\eta|<|\s|$. Next, take $\s\neq\eta$ such that $|\s|=|\eta|$ and $|\t_\s|=|\t_\eta|$. So there is a smallest $N$ for which $\s(N)\neq\eta(N)$. Assume, without losing the generality, that $\s(N)=1$. We aim to show $|\t_\s|-|\t_\eta|>0$. The worst that can happen is 
\begin{align*}&\s(n)=0\ \text{for all}\ n>N\\
&\eta(n)=1\ \text{for all}\ n>N.
\end{align*}
Now, we can only consider shorter lengths:
\[|\t_\s|_N:=\sum_{\substack{\delta\subsetneq\s \\ |\delta|\geq N+1}}\hspace{-0.2cm}n_\delta +\sum_{n\geq N }\s(n).\]
Make a subtraction
\begin{align*}
    |\t_\s|_N-&|\t_\eta|_N=\bigg(\sum_{\substack{\delta\subsetneq\s \\ |\delta|\geq N+1}}\hspace{-0.2cm}n_\delta -\sum_{\substack{\delta'\subsetneq\eta \\ |\delta'|\geq N+1}}\hspace{-0.2cm}n_{\delta'}\bigg)+\bigg(\sum_{n\geq N }\s(n)-\sum_{n\geq N }\eta(n)\bigg)\\=&2^{|\s|-N}
-1+0-2^{|\eta|-N}+1+|\eta|-N+1-|\eta|+N=1>0. \end{align*}
So, $|\t_\s|-|\t_\eta|>0$, a contradiction. Therefore, $\s=\eta$ and the property $(\star)$ is satisfied. Hence, $H\in\mathcal{US}$.

 We will construct a set $\{a_i:i\in\w\}\in\ww$ in such a way that $[H]\res\{a_i:i\in\w\}=2^{\{a_i : i\in\w\}}.$

Take $a_0=0$. Now, take any function $\s_0\in\omm$ such that $\s_0(a_0)=1$. Let $a_1=N_{\s_0}>0$. Next, take $\s_1\supseteq\s_0$ such that $\s_1(a_1)=1$ and $N_{\s_1}>N_{\s_0}$. Put $a_2=N_{\s_1}$. Suppose that we have constructed  $\langle a_i:i\leq n\rangle$ for some $n\in\w$. Pick such $a_{n+1}=N_{\s_n}$ that $N_{\s_n}>N_{\s_m}$ for all $m<n$ and $\s_{n}\sub\s_{n+1}$, where $\s_{n+1}(a_{n+1})=1$. Let $A=\{a_i:i\in\w\}.$  Certainly, $|A|=\w$.

To see that set works, take an arbitrary $a\in A$. Then $a=N_{\s_x}$ for some $\s_x\in\omm$ and $x\in\w$. Then $N_{\s_x}$ is the maximal splitting point at level $2^{N_{\s_x}+1}$. Thus, for all $f\in2^{\leq N_{\s_x}+1}$ and every $j\in2$ we find $f'\supseteq f$ such that $f'(N_{\s_x})=j$. Hence,  $[H]\re A=2^{A}$, so $[H]\notin\ros$.
\end{proof}

Thus, a (body of a) tree from $\mathcal{US}$ can be in every $\sj$-ideal or can omit all of them.

Furthermore, we notice that $H$ is a splitting tree, and it turns out we can generalize the previous theorem. Let $\mathcal{ST}\sub\mathcal{P}(\omm)$ be a class of all splitting trees on $\omm$.

\begin{topo}
    For any $T\in\mathcal{ST}$ we have $[T]\notin\ros$.
\end{topo}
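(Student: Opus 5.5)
We need an infinite $A\sub\w$ with $[T]\res A=2^A$. The splitting-tree property gives, for each node $\t$, a threshold $N_\t$ beyond which every coordinate can take either value above $\t$. Since the tree is infinite-branching in the sense of having $2^{n}$-levels finite, we can handle all nodes at a given level at once. This makes a fusion argument work.

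The construction of $A=\{a_0<a_1<\dots\}$ is by recursion. Let $a_0=N_\emptyset$. Having $a_0<\dots<a_n$, let
\[
a_{n+1}=\max\big(\{a_n+1\}\cup\{N_\t:\t\in T\cap 2^{a_n+1}\}\big).
\]
The set $T\cap 2^{a_n+1}$ is finite and $N_\t$ is defined for every $\t\in T$ (the paper's splitting point), so $a_{n+1}$ is well defined and $A$ is infinite.

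The key claim is the following. For every $n$, every $\t\in T\cap 2^{a_n+1}$ and every $i\in 2$, there is $\t'\in T\cap 2^{a_{n+1}+1}$ with $\t\sub\t'$ and $\t'(a_{n+1})=i$. This holds because $a_{n+1}\geq N_\t$, so the splitting-tree condition applied to $\t$ at $n=a_{n+1}$ gives exactly such a $\t'$. The base case is similar: since $a_0\ge N_\emptyset$, for each $i$ there is $\t\in T\cap 2^{a_0+1}$ with $\t(a_0)=i$.

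Now fix an arbitrary $y\in 2^A$. We build $\t_0\sub\t_1\sub\dots$ with $\t_n\in T\cap 2^{a_n+1}$ and $\t_n(a_k)=y(a_k)$ for all $k\le n$. The base case gives $\t_0$, and the claim gives $\t_{n+1}\supseteq\t_n$ with $\t_{n+1}(a_{n+1})=y(a_{n+1})$. Extension preserves the earlier values. Then $x=\bigcup_n\t_n$ is an infinite branch, since every initial segment of $x$ is an initial segment of some $\t_n\in T$ and $T$ is a tree. Hence $x\in[T]$ and $x\res A=y$.

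It follows that $[T]\res A=2^A$, so $[T]\notin\ros$, and consequently $[T]$ lies in no $\sj$-ideal.

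The main care point is choosing $a_{n+1}$ uniformly for all nodes of level $a_n+1$, not only along one branch. Otherwise arbitrary $y$ could not be realized; this is exactly the gap the maximum over finitely many $N_\t$ closes. One should also note that the splitting condition yields $\t'$ of length exactly $a_{n+1}+1$ extending $\t$, which is what keeps the lengths of the $\t_n$ aligned.
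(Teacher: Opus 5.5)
Your proof is correct and is the same fusion argument as the paper's: each new coordinate is chosen at or above the splitting points $N_\t$ of finitely many nodes from the previous stage. You maximize over all of $T\cap 2^{a_n+1}$, while the paper maximizes over the $2^n$ nodes built so far; your start $a_0=N_\emptyset$ is also slightly cleaner than the paper's $s_0=0$, which fails if $T$ does not split at coordinate $0$.
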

\begin{proof}
Fix $T\in\mathcal{ST}$. We are going to construct a set  $\{ s_i:i\in\w\}\in\ww$ such that $[T]\res \{s_i:i\in\w\}=2^{\{s_i:i\in\w\}}$.

At the very first step put $s_0=0$. Then for $\s_\emptyset=\emptyset$ put $s_1\geq N_{\emptyset}$, $s_1>s_0$. So, there are $\s_1,\s_0\in T$, $\s_1, \s_0\supseteq \s_\emptyset$ such that $\s_1(s_1)=1$, $\s_0(s_1)=0$. Then, take $s_2\geq\text{max}\{N_{\s_1}, N_{\s_0}\}$, $s_2>s_1.$  Suppose that we have taken $s_n$. In the same way put $s_{n+1}\geq \text{max}\{N_{\s_j}: |j|=n\}$ and $s_{n+1}>s_n.$
Let $S=\{s_i :i\in\w\}$. Notice that $|S|=\w$ and $[T]\res S=2^S$, so $[T]\notin\ros$.

\end{proof}
So, every splitting tree omits $\sj$-ideals; the same we can say about uniformly perfect ones. In the case of trees with unique splits, the outcomes may be different.

\section{Perfect and Borel sets}

Let $\per$ be a class of all perfect sets on $\om$.
\vspace{0.3cm}

Make an observation:
\begin{lem}\label{closed}
    For any $P\in \per$ and any $S\in\ww$ the set $P\res S$ is closed.
\end{lem}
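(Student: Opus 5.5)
The plan is to show that $P\res S$ is a continuous image of a compact set, hence compact, hence closed in the Hausdorff space $2^S$.

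First I fix $S\in\ww$ and look at the restriction map $\pi_S:\om\to 2^S$, $\pi_S(x)=x\res S$. I will check that $\pi_S$ is continuous by showing that preimages of basic clopen sets are clopen. Take $\s\in 2^{<S}$, say $\dm(\s)=\{s_i:i<n\}$. Then $\pi_S^{-1}[[\s]_S]=\{x\in\om: x\res\dm(\s)=\s\}$. This is a finite union of basic clopens $[\t]$, where $\t\in 2^{s_{n-1}+1}$ and $\t$ extends $\s$. Hence it is clopen in $\om$.

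Next, $P$ is a closed subset of the compact space $\om$, so $P$ is compact. Therefore $P\res S=\pi_S[P]$ is a compact subset of $2^S$. Since $2^S$ is Hausdorff (it is homeomorphic to $\om$ via $\tilde{s}$), compact subsets of it are closed. This gives the lemma.

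The only step that needs any care is the continuity of $\pi_S$, and that reduces to the description of preimages of basic clopens above. Perfectness of $P$ is not used beyond closedness, so the argument actually shows that $X\res S$ is closed for every closed $X\sub\om$.

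If a more elementary route is preferred, I would argue directly. Suppose $y\in 2^S$ lies in the closure of $P\res S$. Then for each $n$ there is $x_n\in P$ with $x_n\res\{s_i:i<n\}=y\res\{s_i:i<n\}$. By compactness of $P$ some subsequence of $(x_n)$ converges to some $x\in P$, and this limit satisfies $x\res S=y$.
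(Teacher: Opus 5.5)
Your argument is correct and essentially matches the paper's proof: the paper also writes $P\res S=\pi_S[P]$ as a projection of the compact set $P$. The only difference is that the paper gets continuity of $\pi_S$ from the identification $\om\cong 2^S\times 2^{\w\sm S}$, while you check it directly on basic clopens (and you rightly note that only closedness of $P$ is used).
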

\begin{proof}
    Fix $S\in\ww$ and $P\in \per$. Because $\om$ and $2^S\times2^{\w\sm S}$ are homeomorphic, we have $P\res S=\pi_S[P]$. And that $\pi_S[P]$ is a projection of a compact set, so it is closed.
\end{proof}

A straightforward result:
\begin{topo}
    $\eme\cap\per=\ros\cap\per$.
\end{topo}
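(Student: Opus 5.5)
The inclusion $\eme\cap\per\sub\ros\cap\per$ is immediate from $\eme\sub\ros$, which was noted after the definitions. So the plan is to prove the reverse inclusion: take $P\in\per$ with $P\in\ros$ and show that $P\res S$ is meager in $2^S$ for every $S\in\ww$.

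Fix $S\in\ww$. By Lemma \ref{closed}, $P\res S$ is a closed subset of $2^S$. A closed set is meager exactly when it is nowhere dense, i.e.\ has empty interior. So it suffices to show that $P\res S$ contains no basic clopen set $[\s]_S$ with $\s\in 2^{<S}$.

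Suppose, towards a contradiction, that $[\s]_S\sub P\res S$ for some $\s\in 2^{<S}$. Write $\s\in 2^{\{s_i:i<n\}}$ and put $S'=S\sm\{s_i:i<n\}$, which is still in $\ww$. Every $y\in 2^{S'}$ extends to $\s\cup y\in[\s]_S\sub P\res S$. Hence there is $x\in P$ with $x\res S=\s\cup y$, and then $x\res S'=y$. Thus $P\res S'=2^{S'}$, contradicting $P\in\ros$. Therefore $P\res S$ is nowhere dense, hence meager, for every $S$, and so $P\in\eme$.

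No step here is a real obstacle. The only points needing care are the passage from "closed and nonmeager" to "has nonempty interior", which is exactly where Lemma \ref{closed} is used, and the shrinking of $S$ to $S'$ by removing the finitely many coordinates fixed by $\s$.
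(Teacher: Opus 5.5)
Your proof is correct and follows essentially the same route as the paper: Lemma \ref{closed} makes $P\res S$ closed, so if it is nonmeager it contains some $[\s]_S$, and then $P\res(S\sm\dm(\s))=2^{S\sm\dm(\s)}$, contradicting $P\in\ros$. You additionally spell out the trivial inclusion from $\eme\sub\ros$ and check explicitly that removing the coordinates fixed by $\s$ gives the full projection, both of which the paper leaves implicit.
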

\begin{proof}
    Pick any $P\in\per\,\sm\eme$. So, there is $S\in\ww$ such that $P\res S \notin\mathcal{M}(2^S)$. Then, because $P\res S$ is closed, we get that $\text{int}_{2^S}(P\res S)\neq \emptyset$. That means, there is $\s\in2^{<S}$ such that $[\s]\sub P\res S$. Therefore, $P\res(S\sm\text{dom}(\s))=2^{S\,\sm\,\text{dom}(\s)}$, hence $P\notin \ros$.

\end{proof}

The following definition on the real line can be found in \cite{oxto}, before Theorem 3.20. 

\begin{df}
   \normalfont A measurable set $E\sub\om$ is said to have \textit{density 1 at} $e\in\om$, if
    \[\frac{\mu(E\cap[e\res n])}{2^{-n}}\to 1,\ \text{if }\ n\to 
    \infty. \]
\end{df}

\vspace{0.5cm}

Recall that the \textit{symmetric difference} of two sets $A$ and $B$ is the set of points that belong to one but not to both of the sets. We denote it by $A\ \Delta\ B$. Thus, 
    \[A\ \Delta\ B=(A\sm B)\cup(B\sm A).\]
   Define $\Phi(E)$ as a set of all density $1$ points of $E\sub\om$.  Theorem 3.20 in \cite{oxto}, formulated in the Cantor space $\om$, says 
   \begin{align}
     \textit{for any measurable set } E\sub\om,\; \mu(E\ \Delta\ \Phi(E))=0. \tag{$\mathfrak{o}$}
   \end{align}
As a matter of fact, $(\mathfrak{o})$ will be useful to obtain the next result.

% \vspace{0.2cm}

\begin{topo}\label{perfen}
    Every perfect set which is not in \(\en\) contains a body of a uniformly perfect tree. Hence, $\en\cap\per=\ros\cap\per.$
\end{topo}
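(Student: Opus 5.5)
Fix $P\in\per\sm\en$; then there is $S\in\ww$ with $P\res S\notin\mathcal{N}(2^S)$. By Lemma \ref{closed}, $P\res S$ is closed, hence measurable, and $\mu_S(P\res S)>0$. We will find an infinite $S'\subseteq S$ such that every function on $S'$ (after fixing finitely many coordinates) is realised by a point of $P$. Then we pull this back to a uniformly perfect subtree of the tree of $P$.

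\textbf{Step 1 (density).} Apply $(\mathfrak{o})$ in $2^S$ (identified with $\om$ via $\tilde s$). This gives a density point $e$ of $E=P\res S$, so for every $\varepsilon>0$ there is $\s\in 2^{<S}$ with $\mu_S(E\cap[\s]_S)>(1-\varepsilon)\mu_S([\s]_S)$.

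\textbf{Step 2 (full-projection coordinates).} Take $\s$ with relative density $>1-\frac12$. We build $t_0<t_1<\dots$ in $S\sm\dm(\s)$ and keep the following invariant: for every $\rho\in 2^{\{t_0,\dots,t_k\}}$, the set $E_\rho=\{x\in E\cap[\s]_S: x\res\{t_0,\dots,t_k\}=\rho\}$ has relative measure $>1-\varepsilon_k$ in $[\s\cup\rho]_S$, with $\sum_k 2^{k}\varepsilon_k$ small. The naive approach, choosing a single coordinate where both halves are dense, does not iterate directly. Instead I would re-apply Step 1 inside each of the finitely many pieces $[\s\cup\rho]_S$. The first try is to choose $\s$ with density so close to $1$ that the complement $F=[\s]_S\sm E$ has measure $<\delta\mu([\s]_S)$ for a summable sequence of losses. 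For a fixed new coordinate $t$, flipping $x(t)$ is measure preserving. So the set of $x\in[\s\cup\rho]_S$ with $x$ or its $t$-flip in $F$ has measure $\le 2\mu(F\cap[\s\cup\rho])$. Thus the density estimate stays valid for both values of $t$ at the cost of a factor $2$. Ordering the coordinates, after $k$ steps the bad set is at most $2^{k}$ times the original one, which is \emph{not} summable. This is the main obstacle.

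To overcome it, I would instead use compactness. Pick $\s$ so that the relative density of $E$ in $[\s]_S$ exceeds $1/2$. For $x$ in the closed set $E'=\{x\in[\s]_S:\ \text{every finite modification of }x\text{ off }\dm(\s)\text{ lies in }E\}$, the complement of $E'$ is contained in a countable union of flips of $F$. That is still not small, so I prefer the following inductive argument. Choose the coordinates $t_k$ one at a time, large enough that the measure of $E_\rho$ is almost independent of the coordinates beyond $t_k$. This is possible by approximating $E$ by clopen sets. At each stage, re-shrink using density inside each $E_\rho$, replacing $\s$ by longer strings $\s_\rho$ chosen below $t_{k+1}$. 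The result is a system of strings $\s_\rho\in 2^{<S}$, indexed by $\rho$, with $\s_\rho$ and $\s_{\rho'}$ agreeing off the $t$'s. The construction must keep the same finite extension for all $\rho$ of a given length. This can be achieved by intersecting the $2^k$ sets of positive density after translating them by the flip maps. That intersection has positive measure once every relative density exceeds $1-2^{-k-1}$, and a density point of the intersection gives the common extension.

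\textbf{Step 3 (the tree).} Let $S'=\{t_k\}$. By closedness of $E$, every $y\in 2^{S'}$ combined with the common extensions gives a point of $E$, so $P\res S'$ contains every pattern. Let $T_P$ be the tree of $P$. For each $\rho$ choose, by König/compactness, nodes of $T_P$ realising $\rho$ on $\{t_0,\dots,t_k\}$. Pruning level by level so that all nodes extend with the same length, and splitting exactly at levels $t_k$ (padding by a chosen common branch segment is impossible in general, so we choose for each node a witness and take the subtree generated), we obtain a uniformly perfect subtree whose splits lie on $S'$. Its body lies in $P$. Finally, Theorem~\ref{up} gives $P\notin\ros$, so $\ros\cap\per\sub\en\cap\per$. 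The reverse inclusion holds since $\en\sub\ros$.
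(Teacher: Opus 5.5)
\textbf{Gap in Step 3: $P$ itself need not contain a uniformly perfect body.} You try to pull the full projection back to a uniformly perfect subtree of the tree $T_P$ of $P$ in $\omm$, and this cannot work. The witnesses you pick for different patterns $\rho$ may first diverge at coordinates outside $S'$ (possibly outside $S$, which the projection forgot), and at different levels for different pairs. Choosing witnesses, equalising lengths and taking the generated subtree does nothing to synchronise these splitting levels, let alone put them on $S'$.

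No argument can repair this, because the claim is false as you read it. Let $\tau_\emptyset=\emptyset$ and $\tau_{\s\fr i}=\tau_\s{}^\frown i^{m_{\s\fr i}}$, with the $m_{\s\fr i}\geq 1$ chosen so that all lengths $|\tau_\s|$ are pairwise distinct, and let $T$ be the tree they generate. Then $T$ is a splitting tree, since below $\tau_\s$ both $\tau_\s{}^\frown 0^\infty$ and $\tau_\s{}^\frown 1^\infty$ are branches. By the paper's theorem on splitting trees, $[T]\notin\ros$, hence $[T]\in\per\sm\en$. But $T$ has unique splits, and every perfect subtree of $T$ inherits this. A uniformly perfect tree, on the other hand, has at least two splitting nodes of equal length at its second splitting level. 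So $[T]$ contains no body of a uniformly perfect tree.

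The first sentence of the statement therefore has to be read as the paper actually proves it: $P\res S$ contains $[\Sigma]$ for some $\Sigma\in\mathcal{UP}(2^S)$, a tree on $2^{<S}$. Your Step 2 already gives exactly this, since the strings $\s_\rho$ generate such a tree, splitting at the levels $t_k$. For $\en\cap\per=\ros\cap\per$ you should simply stop once $P\res S'=2^{S'}$, which gives $P\notin\ros$ directly.

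\textbf{Step 2: correct in its final form, but a different and longer route than the paper.} Your \emph{main obstacle} exists only because you insist that all nodes of a level share one extension off the $t_k$'s. The paper needs only equal lengths. Suppose $\s_\eta$, $\eta\in 2^n$, have a common length and $\mu_S([\s_\eta]_S\cap P\res S)>\frac12\mu_S([\s_\eta]_S)$. Then both sets $[\s_\eta\frr i]_S\cap P\res S$ have positive measure. Take a density point of each and cut at a common large length; this restores relative density above $\frac12$ for all $2^{n+1}$ new nodes. Every node gets fresh density, so no losses accumulate, and the splitting levels are just the lengths of the $\s_\eta$.

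Your device of intersecting the $2^k$ flipped pieces and taking a density point of the intersection does work, with one piece of bookkeeping. At stage $k$, keep the deficiency of each of the $2^k$ pieces below $2^{-k-3}$; this is possible because a density point gives density arbitrarily close to $1$. After halving at $t_k$, each of the $2^{k+1}$ pieces then has deficiency below $2^{-k-2}$, so the intersection of their flips has relative measure above $\frac12$.

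What your route buys is a stronger, product-shaped conclusion: a single $z\in 2^{S\sm S'}$ such that every $x\in 2^S$ with $x\res(S\sm S')=z$ lies in $P\res S$. The statement does not need this. The summability invariant, the set $E'$ and the clopen approximation are dead ends and should be removed.
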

\begin{proof}
    Take any $P\in\per\,\, \sm\en$. So, there is $S\in\ww$ such that $P\res S\notin\mathcal{N}(2^S)$. By \hyperref[closed]{\textbf{Lemma \ref*{closed}}}, $P\res S$ is measurable, hence of positive measure in $2^S$. Then, $\mu_S(P\res S\ \Delta\ \Phi(P\res S))=0$ and so $\Phi(P\res S)\neq \emptyset$. Thus we can take $x_\emptyset\in\Phi(P\res S)$, and there is $\s_\emptyset\in 2^{<S}$, $\s_\emptyset\sub x_\emptyset$ such that \[\mu_S([\s_\emptyset]_S\cap P\res S)>\frac{1}{2}\mu_S([\s_\emptyset]).\]
Thus,  we can observe that
\begin{align*}
    &\mu_S([\s_\emptyset\frr0]_S\cap P\res S)> 0,\\
    &\mu_S([\s_\emptyset\frr1]_S\cap P\res S)> 0.
\end{align*}
   Hence, we have density $1$ points of these two sets, namely pick
   \begin{align*}
       &x_0\in\Phi([\s_\emptyset\frr0]_S\cap P\res S),\\
   &x_1\in\Phi([\s_\emptyset\frr1]_S\cap P\res S).
   \end{align*}
   Now, we are able to find $\s_0,\s_1\in 2^{<S}$,  $|\s_1|=|\s_0|$ such that $\s_0\supseteq\s_\emptyset\frr 0$, $\s_1\supseteq\s_\emptyset\frr 1$, and $\s_0\sub x_0$, $\s_1\sub x_1$. Moreover, we see that
\begin{align*}
    &\mu_S([\s_0]_S\cap P\res S)>\frac{1}{2}\mu_S([\s_0]_S),\\
    &\mu_S([\s_1]_S\cap P\res S)>\frac{1}{2}\mu_S([\s_1]_S).
\end{align*}
Now, one can have
\begin{align*}
    &\mu_S([\s_0\frr0]_S\cap P\res S)> 0,\\
    &\mu_S([\s_0\frr1]_S\cap P\res S)> 0,\\
    &\mu_S([\s_1\frr0]_S\cap P\res S)> 0,\\
    &\mu_S([\s_1\frr1]_S\cap P\res S)> 0.
\end{align*}
   In the same fashion, we can find
   \begin{align*}
       &x_{00}\in\Phi([\s_0\frr 0 ]_S\cap P\res S),\\
       &x_{01}\in\Phi([\s_0\frr 1 ]_S\cap P\res S),\\
       &x_{10}\in\Phi([\s_1\frr 0 ]_S\cap P\res S),\\
       &x_{11}\in\Phi([\s_1\frr 1 ]_S\cap P\res S).
   \end{align*}
Like before, for all $i_1,i_2\in2$ there are $\s_{i_1i_2}\in 2^{<S}$ of the same lengths such that $\s_{i_1i_2}\supseteq \s_{i_1}\frr i_2$ and $\s_{i_1i_2}\sub x_{i_1i_2}$.
Suppose that we have taken $x_\eta\in\Phi(P\res S)$, $\s_\eta\in2^{<S}$, $\s_\eta\sub x_\eta$ for $\eta \in 2^n$.  So, we can find $x_{\eta\fr i}\in\Phi([\s_\eta\frr i]_S\cap P\res S)$ for all $i\in 2$. Then so for all $i\in 2$ pick $\s_{\eta\fr i}\supseteq\s_\eta\frr i$  of the same lengths such that $\s_{\eta\fr i}\sub x_{\eta\fr i}$ and 
\[\mu_S([\s_{\eta\fr i}]_S\cap P\res S)>\frac{1}{2}\mu_S([\s_{\eta\fr i}]_S).\]
That provides us
\begin{align*}
    &\mu_S([\s_{\eta\fr i}\frr0]_S\cap P\res S)> 0,\\
  &\mu_S([\s_{\eta\fr i}\frr1]_S\cap P\res S)> 0
\end{align*}
for all $i\in 2.$ By Recursion we get  $\langle\s_\eta :\eta\in 2^{<\w}\rangle$, where each $\s_\eta\in 2^{<S}$. Let $\Sigma=\{\t\in 2^{<S}:(\exists \eta\in\omm)\ \t\sub\s_\eta\}$. One can observe that $\Sigma\in\mathcal{UP}(2^S)$;  moreover, the set $[\Sigma]$ is closed, so $ [\Sigma]\sub P\res S$. Thus, by \hyperref[up]{\textbf{Theorem \ref*{up}}}, $P\notin \ros.$ 
\end{proof}

\vspace{0.2cm}

\begin{coro}
    $\ee\cap\per=\ros\cap\per$.
\end{coro}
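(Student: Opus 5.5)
We need to show $\ee\cap\per=\ros\cap\per$. Since $\ee\sub\ros$ (from the diagram), only the inclusion $\ros\cap\per\sub\ee$ needs proof. We will argue by contraposition: take $P\in\per\sm\ee$ and show $P\notin\ros$.

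Fix $S\in\ww$ with $P\res S\notin\mathcal{E}(2^S)$. By Lemma \ref{closed}, $P\res S$ is closed in $2^S$. A closed set is not in $\mathcal{E}(2^S)$, the $\s$-ideal generated by closed null sets, exactly when it is not null. Indeed, if the closed set were null it would itself be a closed null set and hence in $\mathcal{E}(2^S)$. So $P\res S$ has positive measure, and therefore $P\notin\en$.

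Now apply Theorem \ref{perfen}, which says $\en\cap\per=\ros\cap\per$. Since $P\in\per\sm\en$, it follows that $P\notin\ros$. Alternatively, one can use the first part of Theorem \ref{perfen} directly: $P$ contains (after restriction to $S$) the body of a uniformly perfect tree, so Theorem \ref{up} gives $P\notin\ros$. This proves $\ros\cap\per\sub\ee\cap\per$, and together with $\ee\sub\ros$ yields equality.

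The only point needing care is the observation that a closed set outside $\mathcal{E}(2^S)$ must have positive measure. This is immediate because $\mathcal{E}(2^S)$ contains every closed null subset of $2^S$, being the transfer of $\mathcal{E}$ via the homeomorphism $\tilde{s}$, which preserves both closedness and measure. There is no real obstacle beyond this.

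Another route avoids the measure argument. By Fact \ref{eij} and $\mathcal{E}\sub\mathcal{M}\cap\mathcal{N}$, we have $\ee\sub\eme\cap\en$. On perfect sets, the theorem $\eme\cap\per=\ros\cap\per$ and Theorem \ref{perfen} identify both $\eme$ and $\en$ with $\ros$. That alone only gives $\ee\cap\per\sub\ros\cap\per$, however, so the closed-null argument above is the one to use.
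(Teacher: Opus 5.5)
Your proof is correct and follows the paper's argument: $P\res S$ is closed by Lemma~\ref{closed}, a closed set outside $\mathcal{E}(2^S)$ must have positive measure, and the uniformly-perfect-tree argument of Theorem~\ref{perfen} then gives $P\notin\ros$. Where the paper says to repeat the reasoning of Theorem~\ref{perfen}, you note that positive measure of $P\res S$ means $P\notin\en$ and cite the theorem directly; this is the same step, packaged a little more cleanly.
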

\begin{proof}
  Take any $P\in\per\,\sm\ee$. So, for some $S\in\ww$, $P\res S\notin\mathcal{E}(2^S)$. By \hyperref[closed]{\textbf{Lemma \ref*{closed}}} the set $P\res S$ is closed; hence,  $P\res S$ is of positive measure. Therefore, by repeating the reasoning from \hyperref[perfen]{\textbf{Theorem \ref*{perfen}}}, $P\notin\ros.$
\end{proof}
\vspace{0.5cm}

In addition to the above theorems, consider $\mathcal{E}(\mathcal{M}\cap\mathcal{N})$. By \hyperref[eij]{\textbf{Fact \ref*{eij}}}, $ \mathcal{E}(\mathcal{M}\cap\mathcal{N})=\eme\cap\en$. So:

\begin{coro}
    $\mathcal{E}(\mathcal{M}\cap\mathcal{N})\cap\per=\ros\cap\per$.
\end{coro}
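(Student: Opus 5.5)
The statement $\mathcal{E}(\mathcal{M}\cap\mathcal{N})\cap\per=\ros\cap\per$ follows by combining Fact \ref{eij} with the two preceding results on perfect sets. By Fact \ref{eij}, applied to the $\s$-ideals $\mathcal{M}$ and $\mathcal{N}$, we have $\mathcal{E}(\mathcal{M}\cap\mathcal{N})=\eme\cap\en$. Intersecting both sides with $\per$ gives
\[
\mathcal{E}(\mathcal{M}\cap\mathcal{N})\cap\per=(\eme\cap\per)\cap(\en\cap\per).
\]

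Next I substitute the equality $\eme\cap\per=\ros\cap\per$ proved above. I also use the second part of Theorem \ref{perfen}, namely $\en\cap\per=\ros\cap\per$. The right-hand side then becomes $(\ros\cap\per)\cap(\ros\cap\per)=\ros\cap\per$, which is the claim.

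As a sanity check, the inclusion $\subseteq$ is also immediate from the diagram, since $\mathcal{E}(\mathcal{M}\cap\mathcal{N})\sub\eme\sub\ros$. The content therefore lies in the reverse inclusion. For it, take a perfect $P\in\ros$. Then $P\in\eme$ by the meager result, and $P\in\en$ by Theorem \ref{perfen}. Hence $P\in\eme\cap\en=\mathcal{E}(\mathcal{M}\cap\mathcal{N})$.

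There is no real obstacle here. The only point to verify is that Fact \ref{eij} applies, and it does, since $\mathcal{M}$ and $\mathcal{N}$ are $\s$-ideals on $\om$ and $(\mathcal{M}\cap\mathcal{N})(2^S)=\mathcal{M}(2^S)\cap\mathcal{N}(2^S)$ holds by definition.
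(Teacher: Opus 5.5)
Your proof is correct and matches the paper's own argument: it too obtains $\mathcal{E}(\mathcal{M}\cap\mathcal{N})=\eme\cap\en$ from Fact~\ref{eij}, then combines this with $\eme\cap\per=\ros\cap\per$ and $\en\cap\per=\ros\cap\per$. Your separate inclusion-by-inclusion check is redundant but harmless.
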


\vspace{0.5cm}

Recall that for every meager set $M\in\mathcal{M}$ there is $x_M\in\om$ and a partition $(I_n)_{n\in\w}$ of $\w$ into finite intervals such that:
\[M\sub\{x\in\om:(\forall^\infty n)\ x\res I_n\neq x_M\res I_n\}.   \]
Moreover, the latter set is meager. The proof of it can be found in \cite[Theorem 2.2.4]{bart}. 

\vspace{0.5cm}

Let us now consider a class $\mathscr{B}$ of all Borel sets.

\begin{topo}\label{embor}
    $\eme\cap\bor=\ros\cap\bor.$
\end{topo}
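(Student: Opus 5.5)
The plan is to prove the nontrivial inclusion $\ros\cap\bor\sub\eme\cap\bor$ by contraposition; the other inclusion is immediate from $\eme\sub\ros$. So I take a Borel set $B\notin\eme$ and aim to produce an infinite $A$ with $B\res A=2^A$. By assumption there is $S\in\ww$ such that $B\res S$ is not meager in $2^S$.

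First I want to make $B\res S$ comeager on all of a product space. The set $B\res S$ is the projection of the Borel set $B$ along the homeomorphism $\om\cong 2^S\times 2^{\w\sm S}$, exactly as in the proof of Lemma \ref{closed}. Hence $B\res S$ is analytic, and so it has the Baire property in $2^S$. Being non-meager, it is comeager in some basic clopen set $[\s]_S$ with $\s\in 2^{<S}$. Put $S'=S\sm\dm(\s)$, which is still infinite. Since $[\s]_S$ is canonically homeomorphic to $2^{S'}$ via restriction, the set
\[C=\{x\res S' : x\in B\res S,\ \s\sub x\}\sub B\res S'\]
is comeager in $2^{S'}$. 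Moreover, for every infinite $A\sub S'$ we have $C\res A\sub B\res A$. It therefore suffices to find an infinite $A\sub S'$ with $C\res A=2^{A}$.

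Next I apply the characterization of meager sets recalled just before the statement, transported to $2^{S'}$ through the bijection $\tilde s$. Since $2^{S'}\sm C$ is meager, there are $x_M\in 2^{S'}$ and a partition of $S'$ into consecutive finite intervals $(I_n)_{n\in\w}$ such that
\[2^{S'}\sm C\sub\{x\in 2^{S'}:(\forall^\infty n)\ x\res I_n\neq x_M\res I_n\}.\]
Equivalently, every $x\in 2^{S'}$ with $x\res I_n=x_M\res I_n$ for infinitely many $n$ belongs to $C$. Now choose $a_k\in I_{2k}$ (say its minimum) and let $A=\{a_k:k\in\w\}$, which is infinite. Given any $y\in 2^A$, define $x\in 2^{S'}$ by $x\res A=y$, $x\res I_{2k+1}=x_M\res I_{2k+1}$ for all $k$, and $x$ arbitrary (say equal to $x_M$) on the rest of the even intervals. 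This $x$ agrees with $x_M$ on infinitely many $I_n$, so $x\in C$, and $x\res A=y$. Hence $C\res A=2^A$, so $B\res A=2^A$ and $B\notin\ros$.

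I expect the main obstacle to be the first step: justifying that $B\res S$ has the Baire property, via analyticity of projections of Borel sets, and then carefully reducing to a comeager set on a full space $2^{S'}$. The combinatorial step is essentially a direct application of the recalled interval characterization of meager sets. The only care needed there is that the intervals are taken inside $S'$, in its increasing enumeration, so that $A\sub S'$.
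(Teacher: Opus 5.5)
Your proof is correct and follows essentially the same route as the paper. Both arguments use the Baire property of the analytic projection to localize to a basic clopen $[\s]_S$, pass to $S'=S\sm\dm(\s)$, apply the interval characterization of meager sets, and fill the odd intervals with $x_M$ so that arbitrary values on the even intervals are realized. The only differences are cosmetic: you take one point from each $I_{2k}$ where the paper takes all of $\bigcup_n I_{2n}$, and you spell out the transfer from $[\s]_S$ to $2^{S'}$, via your set $C$, more carefully than the paper's brief ``$M^c\sub B\res S'$''.
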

\begin{proof}
    Let $B\in\bor\sm\eme$. There is $S\in\ww$, $B\res S\notin\mathcal{M}(2^S).$ Notice that $B\res S$ is a projection of a Borel set. So $B\res S$ is analytic and therefore  has the Baire property. Hence, there is an open set $U\sub 2^S$ and $M\in\mathcal{M}(2^S)$ such that $U\sm M\sub B\res S$. By that, there is $\s\in 2^{<S}$, $[\s]_S\sm M\sub B\res S$. Consider $S'=S\sm\dm(\s)$. We can find $(I_n)_{n\in\w}$ a partition of $S'$ into finite intervals and $x_M\in 2^{S'}$ such that
    \[M\sub\{x\in 2^{S'}:(\forall^\infty n)\ x\res I_n\neq x_M\res I_n\}.\]
    One can observe that $M^c\sub B\res S'$. Define $S''=\bigcup_{n\in\w}I_{2n}$. We obtain that $B\res S''=2^{S''}$, so $B\notin\ros$.
\end{proof}
\vspace{0.2cm}

\begin{topo}
    $\en\cap\bor=\ros\cap\bor$.
\end{topo}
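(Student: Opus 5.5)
Since $\en\sub\ros$, it suffices to show that every Borel set $B\notin\en$ is also not in $\ros$. Fix $B\in\bor\sm\en$ and $S\in\ww$ with $B\res S\notin\mathcal{N}(2^S)$. As in the proof of Theorem \ref{embor}, $B\res S$ is the projection of a Borel set, hence analytic, hence $\mu_S$-measurable. So $\mu_S(B\res S)>0$. By inner regularity of $\mu_S$ on analytic sets, there is a compact set $K\sub B\res S$ with $\mu_S(K)>0$.

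We cannot apply Theorem \ref{perfen} directly, since $K$ is a subset of the projection $B\res S$ and need not itself be a projection of a perfect set. However, the density argument in the proof of Theorem \ref{perfen} only used that the set in question was closed and of positive measure in $2^S$. Running that construction with $K$ in place of $P\res S$ gives a uniformly perfect tree $\Sigma\sub 2^{<S}$ with $[\Sigma]\sub K$. Concretely, at each stage we pick density points of $[\s_\eta\frr i]_S\cap K$ and extend to nodes of a common length on which $K$ has relative measure $>\frac12$. Closedness of $K$ gives $[\Sigma]\sub K\sub B\res S$.

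Let $S'\sub S$ be the set of levels (indexed within $S$) at which $\Sigma$ splits. This set is infinite because $\Sigma$ is uniformly perfect. As in Theorem \ref{up}, $[\Sigma]\res S'=2^{S'}$. Since $S'\sub S$, we have $B\res S'=(B\res S)\res S'\supseteq[\Sigma]\res S'=2^{S'}$. Hence $B\notin\ros$.

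The main point needing care is the transfer from $B$ to a compact subset of the projection. This rests on measurability of the analytic set $B\res S$ (Lusin) and on inner regularity. One must also check that the proof of Theorem \ref{perfen} genuinely uses only closedness and positive measure, not perfectness of the original set; it does.
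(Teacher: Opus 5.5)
Your proof is correct and follows essentially the same route as the paper: analyticity of $B\res S$ gives measurability, you extract a closed positive-measure subset of $B\res S$, and the density-point construction of Theorem~\ref{perfen} produces a uniformly perfect tree whose splitting levels $S'\sub S$ satisfy $B\res S'=2^{S'}$. The only difference is cosmetic: the paper passes to a perfect subset via Cantor--Bendixson and cites Theorem~\ref{perfen} directly in $2^S$, whereas you rerun the density argument for the compact set $K$; your worry that Theorem~\ref{perfen} cannot be applied directly is unnecessary, since a perfect positive-measure subset of $2^S\cong\om$ is trivially not in $\en(2^S)$.
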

\begin{proof}
    Let $B\in\bor\sm\en$. So, there exists $S\in\ww$, $B\res S\notin \mathcal{N}(2^S)$. Once again, $B\res S$ is analytic, thus measurable. By all of that, it contains a closed set of positive measure. So, by Cantor-Bendixson Theorem (\cite[Theorem 4.6]{jech}), $B\re S$ contains a perfect set of positive measure $P\sub 2^S.$ But, by \hyperref[perfen]{\textbf{Theorem \ref*{perfen}}}, there is $S_\heartsuit\in[S]^\w\sub\ww$ such that $P\res S_\heartsuit=2^{S_\heartsuit}$ . Therefore,  $P\notin\ros$, so $B\notin\ros$. 

\end{proof}
\vspace{0.2cm}

\begin{topo}
    $\ee\cap\bor=\ros\cap\bor$.
\end{topo}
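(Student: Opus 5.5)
Since $\ee\sub\ros$, only the inclusion $\ros\cap\bor\sub\ee$ needs proof, and I will argue by contraposition. Take $B\in\bor\sm\ee$. Then there is $S\in\ww$ with $B\res S\notin\mathcal{E}(2^S)$. As in the proof of \hyperref[embor]{\textbf{Theorem \ref*{embor}}}, $B\res S$ is a continuous image of a Borel set, so it is analytic. Analytic sets have the Baire property and are measurable, and I will use both facts.

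I will split into two cases according to whether $B\res S$ is null in $2^S$.

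\emph{Case 1: $B\res S\notin\mathcal{N}(2^S)$.} Then $B\res S$ contains a closed set of positive measure, hence a perfect set $P\sub 2^S$ of positive measure. By \hyperref[perfen]{\textbf{Theorem \ref*{perfen}}} (transported to $2^S$ via $\tilde{s}$), $P$ contains the body of a uniformly perfect tree. So there is $S'\in[S]^\w$ with $P\res S'=2^{S'}$, hence $B\res S'=2^{S'}$ and $B\notin\ros$.

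\emph{Case 2: $B\res S\in\mathcal{N}(2^S)$.} Here I use the standard fact that a null set with the Baire property that is not in $\mathcal{E}$ must be nonmeager. The reason: every null set is contained in a null $G_\delta$ set $G$. If $B\res S$ were meager, it would lie in a meager $F_\sigma$ set $F$, and then $B\res S\sub F\cap G$. The set $F\cap G$ is a null $F_\sigma$ set, i.e.\ a countable union of closed null sets, so it belongs to $\mathcal{E}$. This contradicts $B\res S\notin\mathcal{E}(2^S)$. Hence $B\res S\notin\mathcal{M}(2^S)$, so $B\notin\eme$. By \hyperref[embor]{\textbf{Theorem \ref*{embor}}}, $B\notin\ros$.

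Together the two cases give $\ros\cap\bor\sub\ee\cap\bor$. Equivalently, the whole argument can be phrased as: by Fact~\ref{eij}-style reasoning, $\ee\cap\bor=\eme\cap\en\cap\bor$, and then the two preceding theorems apply. The only delicate point is the identity $\mathcal{E}\cap\{\text{sets with the Baire property}\}\supseteq\mathcal{M}\cap\mathcal{N}\cap\{\text{sets with the Baire property}\}$, which is exactly the $F_\sigma$/$G_\delta$ intersection argument above; it needs the Baire property of the analytic set $B\res S$, and everything else is routine.
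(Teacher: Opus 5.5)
Your Case 1 is correct; it is exactly the paper's proof of $\en\cap\bor=\ros\cap\bor$. Case 2, however, rests on a false claim.

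If $F=\bigcup_n F_n$ is a meager $F_\s$ set and $G$ is a null $G_\delta$ set, then $F\cap G=\bigcup_n(F_n\cap G)$ is a countable union of $G_\delta$ sets. It is not in general $F_\s$, so nothing places it in $\mathcal{E}$. In fact $\mathcal{E}\subsetneq\mathcal{M}\cap\mathcal{N}$ even among $G_\delta$ sets. To see this, take:
\begin{itemize}
\item a nowhere dense closed $C\sub\om$ all of whose nonempty portions $C\cap[\s]$ have positive measure;
\item a countable $D\sub C$ dense in $C$;
\item open sets $V_k\supseteq D$ with $\mu(V_k)<2^{-k}$, and put $H=C\cap\bigcap_k V_k$.
\end{itemize}
Then $H$ is a null, meager $G_\delta$ set, and $H\notin\mathcal{E}$. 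Indeed, suppose $H\sub\bigcup_n F_n$ with each $F_n$ closed null. Since $H$ is comeager in the compact space $C$, some $F_n$ contains a nonempty portion of $C$, which has positive measure, a contradiction.

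Now take $B=H$ and $S=\w$. The set $B\res S$ is Borel, null and not in $\mathcal{E}$, yet it is meager. So your inference ``hence $B\res S\notin\mathcal{M}(2^S)$'' fails. For the same reason your closing remark does not follow from Fact~\ref{eij}. That fact only gives $\mathcal{E}(\mathcal{M}\cap\mathcal{N})=\eme\cap\en$; identifying this with $\ee$ on Borel sets is precisely the content of the theorem.

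The case $B\res S\in(\mathcal{M}\cap\mathcal{N})(2^S)\sm\mathcal{E}(2^S)$ cannot be decided at the level $S$. You have to pass to a thinner set of coordinates, and for that you need a structural fact about analytic sets outside $\mathcal{E}$.

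The paper proceeds as follows. It uses \cite[Lemma 2.11]{szym} to get a perfect $P\sub 2^S$ of positive measure and a $G_\delta$ set $G\sub B\res S$ dense in $P$. It then thins $S$ to $S'$ by Theorem~\ref{perfen} so that $P\res S'=2^{S'}$, and argues that $G\res S'$ is comeager in $2^{S'}$ in order to apply Theorem~\ref{embor}. If you take that route, note that in general only $(\bigcap_n X_n)\res S'\sub\bigcap_n(X_n\res S')$ holds. So the $G_\delta$ structure of $G$ does not pass to $G\res S'$ automatically.

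A cleaner finish is a fusion argument:
\begin{itemize}
\item By Solecki's theorem for $\s$-ideals generated by closed sets, there is a $G_\delta$ set $G\sub B\res S$ all of whose nonempty relatively open subsets lie outside $\mathcal{E}(2^S)$.
\item Then every nonempty portion of $P=\overline{G}$ has positive measure, and $G=P\cap\bigcap_n U_n$ with each $U_n$ open and $U_n\cap P$ dense in $P$.
\item Run the construction of Theorem~\ref{perfen} inside $P$. At stage $n$, fix the common splitting length via density points, as in that proof.
\item Extend each of the $2^{n+1}$ one-point extensions to a node $\rho$ with $[\rho]_S\sub U_{n+1}$ and $[\rho]_S\cap P\neq\emptyset$; this is possible because $U_{n+1}\cap P$ is dense in $P$. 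Then equalize lengths inside $P$.
\end{itemize}
The resulting uniformly perfect tree in $2^{<S}$ has its body inside $G\sub B\res S$. As in Theorem~\ref{up}, $B\res S''=2^{S''}$ for its set $S''$ of splitting coordinates, so $B\notin\ros$. This argument also covers your Case 1, so the case split becomes unnecessary.
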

\begin{proof}
    Consider any $B\in\bor\sm\ee$. So, there is $S\in\ww$, $B\res S\notin\mathcal{E}(2^S)$. The set $B\res S$ is analytic. Then, by  \cite[Lemma 2.11]{szym}  there is $P\in \per\, (2^S)$, $\mu_S(P)>0$ and a $G_\delta$ set $G\sub P$ which is dense in $P$; moreover, $G\sub B\res S$.  Thus, using \hyperref[perfen]{\textbf{Theorem \ref*{perfen}}}, there exists $S'\in[S]^\w$ such that $P\res S'=2^{S'}.$ Note that $G\res S'\sub B\res S'\sub P\res S'$. We will  show that $G\res  S'$ is a dense $G_\delta$ set in $P\res S'.$  So:
    \begin{itemize}
        \item[$(\bullet)$] $G\res  S'$ is dense in $P\res S'.$

        Take any $\s\in 2^{<S'}$, $[\s]_{S'}\sub P\res S'$. One has $\eta\in 2^{<S}$, $\eta\res S'=\s$ such that $[\eta]_S\cap P\neq \emptyset$. The set $G\sub 2^S$ is dense in $P\sub 2^S$, so we have $x\in[\eta]_S\cap G $. But then $x\res S'\in[\s]_{S'}\cap G\res S'$, and we are done.

         \item[$(\bullet\bullet)$] $G\res S'$ is a $G_\delta$ set in $P\res S'$.

To show the above, divide the argument into the following steps denoted by `$\rightsquigarrow$':

         \begin{itemize}
             \item[$\rightsquigarrow$] For $\s\in 2^{<S}$ we have $[\s]_S\res S'=[\s\res S']_{S'}$.
            
\vspace{0.1cm}
             
             Take any $f\in[\s]_{S}\res S'$. There is $x\in 2^S, x\supseteq\s$ such that $x\res S'=f.$ 
             Notice that $\s\res S'\sub x\res S'.$ Hence, $f\in [\s\res S']_{S'}.$

             Now, take $g\in [\s\res S']_{S'}.$ Define \[\tilde{g}=g\cup \s \cup (S\sm (S'\cup {\rm dom}(\s ))\times\{0\}). \]
             Notice that \(\tilde{g}\in [\s]_S\) and \(\tilde{g}\res S'=g\), hence \(g\in [\s]_S\res S'\).

             \item[$\rightsquigarrow$] $\big(\bigcup_{n<\w}X_n\big)\res S'=\bigcup_{n<\w}X_n\res S'$.
             \vspace{0.3cm}

             Indeed,
             \begin{align*}
            \big(\bigcup_{n<\w}X_n\big)\res S'&=\{x\res S':x\in\bigcup_{n<\w}X_n\big\} =\{x\res S':(\exists n<\w)\ x\in X_n\}\\&=\bigcup_{n<\w}\{x\res S':x\in X_n\}=\bigcup_{n<\w}X_n\res S'.
             \end{align*}

\item[$\rightsquigarrow$]  $\big(\bigcap_{n<\w}X_n\big)\res S'=\bigcap_{n<\w}X_n\res S'$.
             \vspace{0.3cm}

             The argument is similar to that in the case of the generalized sum.

         \end{itemize}

     Go back to our $G\res S'$. First, there are $\s_{n,m}\in 2^{<S}$ such that $G=\bigcap_{n<\w}\bigcup_{m<\w}[\s_{n,m}]_{S}$. Now we can write the following equalities:
     \begin{align*}
         G\res S'=\left(\bigcap_{n<\w}\bigcup_{m<\w}[\s_{n,m}]_{S}\right)\res S'=\bigcap_{n<\w}\bigcup_{m<\w}[\s_{n,m}\res S']_{S'}.
     \end{align*}
    Thus, $G\res  S'$ is also a $G_\delta$ set in $P\res S'.$
    
    \end{itemize}

Recall that $G\res S'\sub B\res S'$. We have shown that $B\res S'$ contains a comeager subset of $2^{S'}$. By \hyperref[embor]{\textbf{Theorem \ref*{embor}}}, $B\notin \ros.$ 
\end{proof}

\end{document}